\numberwithin{equation}{section}
\font\tenms=msbm10
\font\sevenms=msbm7
\font\fivems=msbm5
\newtheorem{Thm}{Theorem}
\newtheorem{Def}{Definition}[section]
\newtheorem{Rem}[Def]{Remark}
\newtheorem{Prop}[Def]{Proposition}
\newtheorem{Lem}[Def]{Lemma}
\def\d {{\partial}}
\newcommand{\ba}{\begin{aligned}}
\newcommand{\ea}{\end{aligned}}
\newcommand{\be}{\begin{equation}}
\newcommand{\ee}{\end{equation}}
\def \N{{\mathbf N}}
\def \R{{\mathbf R}}
\def\cK{\mathcal{K}}
\def \supp{\hbox{{\rm Supp}}}
\def \eps{{\varepsilon}}
\def \e{{\varepsilon}}
\def \CC{{\mathcal C}}
\def \cK{{\mathcal K}}
\def \d{{\partial}}
\newcommand{\xib}{\langle \xi\rangle_b}
\newcommand{{\bv}}{ \bf v}
\begin{document}

\title[On the propagation of oceanic waves]{On the propagation of oceanic waves driven by a strong macroscopic flow}

\author[I. Gallagher]{Isabelle Gallagher}
\address[I. Gallagher]%
{Institut de Math{\'e}matiques UMR 7586 \\
      Universit{\'e} Paris VII \\
175, rue du Chevaleret\\
75013 Paris\\FRANCE}
\email{Isabelle.Gallagher@math.jussieu.fr}
\author[T. Paul]{Thierry Paul}
\address[T. Paul]{CNRS and Centre de mathématiques Laurent Schwartz \\
 \'Ecole polytechnique, 91128 Palaiseau cedex \\FRANCE}
\email{thierry.paul@math.polytechnique.fr}
\author[L. Saint-Raymond]{Laure Saint-Raymond}
\address[L. Saint-Raymond]%
{Universit\'e Paris VI and DMA \'Ecole Normale Sup\'erieure, 45 rue d'Ulm, 75230
Paris Cedex 05\\FRANCE }
\email{Laure.Saint-Raymond@ens.fr}

 \begin{abstract}
In this work we study oceanic waves in a shallow water  flow subject to strong wind forcing and rotation, and linearized around a inhomogeneous (non zonal) stationary profile. 
 This   extends the study~\cite{CGPS}, where the profile was assumed to be zonal only and where explicit calculations were made possible due to the 1D setting.
 
 Here the diagonalization of the system, which allows to identify Rossby and Poincar\'e waves, is  proved by an abstract semi-classical approach. The dispersion of Poincar\'e waves is also obtained   by a more abstract and more robust method using Mourre estimates. Only some partial results however are   obtained concerning the Rossby propagation, as the two dimensional setting  complicates very much the study of the dynamical system.
  \end{abstract}
\keywords{Semiclassical analysis; microlocal analysis; Mourre estimates; Geophysical flows}
\subjclass[2010]{35Q86; 76M45; 35S30; 81Q20}

\maketitle

\section{Introduction}
This paper is a continuation of~\cite{CGPS} so before discussing the matter of this paper (in Section~\ref{results}) let us   review the contents of  that work.  We shall start by recalling briefly the model, then we shall explain the methods and results obtained in~\cite{CGPS} and discuss their limitations. 

\subsection{The model}
The goal of~\cite{CGPS}  is   to understand, through the study of a toy model, the persistence of oceanic eddies  observed long past by physicists among which~\cite{gill,gill-longuet,Greenspan,majda,P1,P2}, who gave heuristic arguments  to explain their formation due both to   wind forcing and to   convection by a macroscopic current.

The ocean is considered in this toy model as an incompressible, inviscid fluid with free surface submitted to gravitation and wind forcing,
and we further make the following classical assumptions: we assume that  the density of the fluid is homogeneous  $\rho =\rho_0 =\hbox{constant}$, that the pressure law is given by the hydrostatic
approximation $p=\rho_0  g z $, and that the motion is essentially horizontal and does not
depend on the vertical coordinate. This leads   to the so-called   shallow water approximation.
 
For the sake of simplicity,  the effects of the interaction with the boundaries are not discussed and the model is purely horizontal with the longitude~$x_1$ and the latitude~$x_2$ both in~$\R$.
\bigskip

The evolution of the water height $h$ and velocity $v$ is then governed by
the  shallow-water equations with Coriolis force 
\begin{equation}
\label{SW}
\begin{aligned}
\d_t( \rho_0 h) +\nabla\cdot (\rho_0hv) =0\\
\d_t (\rho_0hv) +\nabla\cdot  (\rho_0 hv \otimes v) + \omega(\rho_0 hv)^\perp + \rho_0 g h\nabla h =\rho_0h\tau
\end{aligned}
\end{equation}
where $\omega $ denotes the vertical component of the Earth rotation vector $\Omega$, $v^\perp := (-v_2,v_1)$, $g$ is the gravity
 and
$\tau$ is the - stationary - forcing responsible for  the macroscopic flow.  The vertical component of the Earth rotation is therefore~$\Omega \sin (x_2/R)$, where~$R$ is the radius of the Earth; note that  it is classical in the physical literature
to consider the linearization of~$\omega $ (known as the betaplane approximation)~$\omega(x_2) =\Omega  x_2/R$. We consider general functions~$\omega$ in the sequel, with some restrictions that are be made precise later.

We    consider  small fluctuations  around the stationary solution~$(\bar h, \bar v)$ satisfying
$$ \bar h  = \mbox{constant},\quad \nabla\cdot  (\bar v\otimes \bar v) +  {\omega} \bar v^\perp  =\tau  , \quad \mbox{div} \: \bar v = 0.$$
   In~\cite{CGPS} the study is restricted  to the case of a shear flow, in the sense that 
$
\bar v (x)= (\bar v_1(x_2),0),
$
with~$\bar v_1$   a smooth, compactly supported function.  Moreover  some orders of magnitude and scalings   allow to transform the previous system into the following one:
\begin{equation}
\label{SV2}
\begin{aligned}
  \d_t \eta +  \frac1{\eps}  \nabla \cdot u +  \bar u \cdot \nabla \eta +\eps^2 \nabla \cdot (\eta u)=0\,,\\
 \d_t u  +      \frac1{\eps^2}  b  u^\perp  +\frac1\eps  \nabla \eta +\bar u \cdot \nabla u + u\cdot \nabla \bar u +\eps^2 u \cdot \nabla u =0\,,
\end{aligned}
\end{equation}
where~$b:=\omega/|\Omega|$ and where~$\e$ is a small parameter (of the order of~$\mbox{Fr}^2$, where $\mbox{Fr}$ is the Froude number, and of~$\mbox{Ro}^\frac12$  where~$\mbox{Ro} $ is the Rossby number).

\subsection{Methods and results in~\cite{CGPS}}

Most of the analysis  in~\cite{CGPS}  concerns the linear version of~(\ref{SV2}), namely the following system:
     \begin{equation}\label{linsystcgps}
\displaystyle \eps^2 i\partial_t{{\bv}} + A(x_2,\eps D, \eps){\bv} =  0 \,  \qquad 
{\bv} = (v_0,v_1,v_2) ,
\end{equation} 
where~$D := \frac 1 i \partial$, and the linear propagator is given by
$$
A (x_2,\eps D, \eps):=   i\left( \begin{matrix}   \eps\bar u_1  \eps \partial_1  &\eps \d_1&\eps \d_2 \\
\eps \d_1 & \eps\bar u_1  \eps \partial_1   & -b( x_2)+\eps^2   \bar u_1' \\
\eps \d_2 & b( x_2)  &  \eps\bar u_1  \eps \partial_1 \end{matrix} \right) \,,
$$

The first step of the analysis  consists in diagonalizing (approximately) the system~(\ref{linsystcgps}). The computation of a kind of {\bf characteristic polynomial} associated with~(\ref{linsystcgps}), in symbolic form, allows to construct three  symbols the quantization of which provides three scalar propagators (this will be explained more explicitly below). 

Two of those propagators, called Poincar\'e propagators, are then proved to satisfy dispersive estimates; that result relies on a spectral analysis (usual semi-classical theory does not operate here due to the very large time scales at play) using {\bf Bohr-Sommerfeld quantization}, which requires that~$b$ has only one, non degenerate critical value and which also uses very much the fact that the motion is translation-invariant in~$x_1$. A stationary phase argument on the spectral decomposition of any solution to the Poincar\'e propagation gives the result: Poincar\'e modes exit any compact set in finite time. 

The last propagator is the Rossby one, which is one order of magnitude (in~$\e$) smaller than the Poincar\'e modes. This allows to analyse the propagation by    semi-classical analysis tools. In particular the precise study of the dynamical system associated with those waves, which is an {\bf integrable system} due to translation invariance in~$x_1$,  allows to derive a   condition on the initial microlocalization of the solution which guarantees that the Rossby waves are trapped for all times in a compact set.

Those results on the linear system~(\ref{linsystcgps}) can finally be transposed   to the original system~(\ref{SV2}) due to the high power of~$\e$ in front of the nonlinearity, and due to the semi-classical setting, which allows to exhibit  vector fields which almost-commute with the linear operator~$A (x_2,\eps D, \eps)$.

\subsection{Limitations of the methods of~\cite{CGPS} }
The restriction which is the most used in the   analysis described briefly in the previous paragraph is the fact that the stationary flow~$\bar u$ is a shear flow of the type~$\bar u = (\bar u_1( x_2) , 0)$. Indeed  

\begin{itemize}

\item It allows to Fourier-transform in the direction~$x_1$, which makes the diagonalization procedure   much easier;

\item  It   simplifies the spectral analysis of Poincar\'e waves, again due to the Fourier transform (in particular the dual variable~$\xi_1 $ is fixed during the propagation, and there is a wave-like behaviour in~$x_1$);

\item  It   allows   the Rossby dynamical system to be integrable, which is a tremendous help in the analysis. 

\end{itemize}

An additional restriction in the previous arguments is that in order to prove the dispersion of  Poincar\'e waves, the rotation amplitude~$b$ should have only one, non degenerate critical value: this allows to use a Bohr-Sommerfeld quantization argument to compute the eigenvalues of the Poincar\'e operator. This assumption on $b$  is not really restrictive from the physical point of view. On the other hand, it is important for physical reasons to consider 2D convection flows.

\subsection{On the nonlinear term}
As explained above, most of the analysis in~\cite{CGPS} is concerned with the  linear system~(\ref{linsystcgps}). In order to transpose the linear results to the nonlinear setting, one uses the following arguments (along with the fact that the coupling is vanishes when~$\e$ goes to zero):

\begin{itemize}

\item Uniform existence which is obtained via an almost-commutation result;

\item  Bilinear estimates in anisotropic semi-classical spaces;

\item   A Gronwall lemma, which requires an~$L^\infty(\R^2)$ bound on the linear solution. This is not known in general, due to the bad Sobolev embeddings in   semi-classical settings, so the nonlinear result is proved for  vanishing couplings only.
\end{itemize}

It is important to notice that none of those three steps require  that~$\bar u $ is a shear flow. In the whole of this paper we shall therefore only focus on the linear equation, and leave to the reader the transposition to the nonlinear equation, using the above steps.

 \section{Main result of this  paper and strategy of the proof}\label{results}
\subsection{The model}
   In this paper we shall be concerned with the linear system
     \begin{equation}\label{linsyst}
\displaystyle \eps^2 i  \partial_t{\bv} + A(x,\eps D, \eps){\bv} =  0 \,  \qquad 
{\bv} = (v_0,v_1,v_2) ,
\end{equation} 
where the linear propagator is given by
\begin{equation}\label{defA}
A (x,\eps D, \eps):=  i  \left( \begin{matrix}   \eps\bar u\cdot \eps \nabla  &\eps \d_1&\eps \d_2 \\
\eps \d_1 & \eps\bar u\cdot \eps \nabla +\eps^2 \partial_1 \bar u_1  & -b( x_2)+\eps^2 \partial_2 \bar u_1 \\
\eps \d_2 & b( x_2) +\eps^2 \partial_1 \bar u_2&  \eps\bar u\cdot \eps \nabla +\eps^2 \partial_2 \bar u_2 \end{matrix} \right) \,.
\end{equation}

We shall assume throughout the paper that~$b$ is smooth, with a  symbol-like behaviour: for all~$ \alpha  \in \N $, there is a constant~$ C_\alpha$ such that for all~$ x_2\in \R $,
\begin{equation}\label{bsymbol}
    |b^{(\alpha )} (x_2)|  \leq C_\alpha \bigl(1+b^2(x_2) \bigr)^\frac12.
 \end{equation}
We shall further assume that
$$
\lim_{|x_2| \to \infty} b^2(x_2) = \infty,
$$ 
and that $b^2$ has only non degenerate critical points.

We shall also  suppose that the initial data is microlocalized in  some compact set~$\mathcal C$ of~$T^* \R^{2}   $   satisfying
\begin{equation}\label{cond1}
\mathcal C\cap\{\xi_1^2 + \xi_2^2 + b^2(x_2)=0\} = \emptyset \,
\end{equation}
or actually rather
\begin{equation}\label{cond2}
\mathcal C\cap\{\xi_1=0\} = \emptyset \,.
\end{equation}
We shall prove that  assumption  (\ref{cond1}) is propagated by the  flow, while (\ref{cond2}) is propagated only by the Poincar\'e component.
    We recall (see for instance~\cite{CGPS},  Appendix B) that a function~$f$ is  microlocalized  in a  compact set~${\mathcal C}$ of~$T^* \R^{2}   $ if
for any~$(x_0,\xi_0)  $ in the complement of~${\mathcal C}$ in~$  \R^{4}   $ (we shall identify ~$T^* \R^{2}   $ to~$\R^4$ in the following),
  there is a smooth function~$\chi_0 $, bounded as well as all its derivatives and equal to one at~$(x_0,\xi_0)$, satisfying
\begin{equation}\label{microloc}
\| {\rm Op}^W_\eps (\chi_0)u_0 \|_{L^2(\R^{2})} = O(\eps^\infty),
\end{equation}
where~${\rm Op}^W_\eps$ denotes the Weyl quantization:
\begin{equation}\label{weyl}
 {\rm Op}^W_\eps(\chi_0) u_0(x ) := \frac1{(2\pi \eps)^4} \int e^{i(x-y) \cdot \xi/\eps} \chi_0 (\frac{x+y}2,\xi) u_0(y) \: dyd\xi
.
\end{equation}
We also recall that~(\ref{microloc}) means that for any~$N \in \N$, there are~$\e_0$ and~$C$ such that
$$
  \forall \eps \in  ]0,\eps_0], \quad \| {\rm Op}^W_\eps (\chi_0)u_0 \|_{L^2(\R^{2})}  \leq C \e^N .
$$
    In the following, to simplify some formulations, we shall denote by~$(\mu)\mbox{Supp}_{\star} f$ the projection of the (micro)support of~$f$ onto the~$\star = 0$ axis.

    \subsection{Statement of the main result and organization of the paper}
    
        Let us state the   main theorem proved in   this paper.
    \begin{Thm} \label{mainresult} 
Let~${\bv}_{\e,0} $ be a family of initial data, microlocalized in a compact set~${\mathcal C}$  satisfying Assumptions~(\ref{cond1})-(\ref{cond2}). For any parameter~$\eps >0$, denote by ${\bv}_\eps$ the associate solution to~(\ref{linsyst}).
Then for all~$t  \geq 0$ one can write~${\bv}_\e(t)$ as the sum of a ``Rossby" vector field and a ``Poincar\'e" vector field: $ {\bv}_\e(t)  = {\bv}_\e^R(t)  + {\bv}_\e^P(t) $, satisfying the following properties:
 
\begin{enumerate}
\item \label{nondegeneracy}
$\mu \mbox{Supp}\:    {\bv}_\e^R(t) $ and  $\mu \mbox{Supp} \:  {\bv}_\e^P(t) $ satisfy~(\ref{cond1}) for all times.

\item \label{poincarebehaviour}
For any compact set~$\Omega$ in~$\R^2$, one has
$$
\forall  t> 0,\quad  \| {\bv}_\e^P(t)\|_{L^2(\Omega)}=O(\e^\infty).
$$

\item \label{rossbybehaviour}
 $\mu \mbox{Supp}_{x_2}   {\bv}_\e^R(t) $ lies in a   bounded subset of~$\R$ uniformly in time.
\end{enumerate}
\end{Thm}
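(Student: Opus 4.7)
The plan is to follow the three-pillar strategy announced in the introduction: a semiclassical diagonalization of $A(x,\eps D,\eps)$, a Mourre-type analysis of the Poincaré component, and a dynamical-systems analysis of the Rossby component. Conclusion~(\ref{nondegeneracy}) will come from Egorov-type propagation of microsupport for each scalar propagator once the diagonalization is in place; (\ref{poincarebehaviour}) will come from a local $L^2$-decay estimate produced by Mourre's commutator method; and (\ref{rossbybehaviour}) will come from conservation of the Rossby Hamiltonian along its trajectories combined with the growth of $b^2$ at infinity.

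The first step is to compute the characteristic polynomial of the principal symbol of $A$. Away from $\{\xi_1^2+\xi_2^2+b^2(x_2)=0\}$, which is excluded by~(\ref{cond1}), it factors at $\eps=0$ as $\lambda(\lambda^2-(\xi_1^2+\xi_2^2+b^2))$, so there are three simple roots: a Rossby eigenvalue $\lambda^R=\eps\,\bar u\cdot\xi+O(\eps^2)$ and two Poincaré eigenvalues $\lambda^\pm=\pm\sqrt{\xi_1^2+\xi_2^2+b^2(x_2)}+O(\eps)$. On the microlocal region determined by~${\mathcal C}$ these remain uniformly separated, so the matrix-valued spectral projectors $\Pi^{R,\pm}(x,\xi)$ are smooth symbols. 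A standard iterative construction, solving at each step an off-diagonal algebraic equation of the form $(\lambda^R-\lambda^\pm)q=r$, then produces corrected projectors $\widetilde\Pi^{R,\pm}$ with $\sum_\alpha\widetilde\Pi^\alpha=\mbox{Id}$ and $[{\rm Op}^W_\eps(\widetilde\Pi^\alpha),A]=O(\eps^\infty)$ microlocally on~${\mathcal C}$. Setting ${\bv}_\eps^R:={\rm Op}^W_\eps(\widetilde\Pi^R){\bv}_\eps$ and ${\bv}_\eps^P:={\rm Op}^W_\eps(\widetilde\Pi^++\widetilde\Pi^-){\bv}_\eps$, each component then satisfies an almost-scalar equation with principal symbol $\lambda^\alpha/\eps^2$, and Egorov's theorem for those scalar symbols yields~(\ref{nondegeneracy}), the characteristic variety being manifestly preserved by each Hamiltonian flow.

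For~(\ref{poincarebehaviour}), I would apply Mourre's commutator theory to each Poincaré operator $H^\pm:={\rm Op}^W_\eps(\lambda^\pm)$, viewed as a selfadjoint operator with semiclassical parameter~$\eps$. The natural candidate for the conjugate operator $\mathbf{A}$ is a Weyl-symmetrized dilation-type generator with symbol $a\sim x\cdot\xi$ (possibly modified to adapt to the geometry of~$\lambda^\pm$), so that the principal symbol of $i[H^\pm,\mathbf{A}]$ is the Poisson bracket $\{\lambda^\pm,a\}$; thanks to~(\ref{cond2}) and to the coercivity of $\lambda^\pm$, this is bounded below on the spectral window $I$ corresponding to~${\mathcal C}$. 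A strict Mourre inequality of the form $i[H^\pm,\mathbf{A}]\geq c\,E_I(H^\pm)+K$ with $c>0$ and $K$ compact, together with the associated limiting absorption principle, produces local $L^2$-smoothing; in the semiclassical regime, combined with the fast oscillation rate $\eps^{-2}$ of the Poincaré modes, this upgrades to the bound $\|{\bv}_\eps^P(t)\|_{L^2(\Omega)}=O(\eps^\infty)$ for any compact $\Omega\subset\R^2$.

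For~(\ref{rossbybehaviour}), the microsupport of ${\bv}_\eps^R(t)$ evolves along the Hamiltonian flow of $\lambda^R/\eps^2$. Unlike in~\cite{CGPS}, this flow is no longer integrable since $\bar u$ depends on both coordinates and translation invariance in~$x_1$ is lost. One nevertheless retains conservation of $\lambda^R$ along trajectories, and the structure of $\lambda^R$ (after subtracting the harmless convection $\eps\bar u\cdot\xi$) reveals a factor involving $b^{-2}(x_2)$ which becomes coercive as $|x_2|\to\infty$ thanks to the hypothesis $b^2(x_2)\to\infty$. Exploiting this coercivity, together with assumption~(\ref{cond2}) which keeps the microsupport away from the singular set $\xi_1=0$ where the Rossby eigenvalue degenerates, will confine the $x_2$-projection of the microsupport to a bounded set uniformly in time. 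I expect \emph{this last step to be the main obstacle}: without integrability one cannot simultaneously control $x_1$, $\xi_1$ and $\xi_2$ along the Rossby flow, and this is precisely why the theorem only asserts boundedness of $\mu\mbox{Supp}_{x_2}{\bv}_\eps^R$ rather than full trapping in a compact subset of $T^*\R^2$.
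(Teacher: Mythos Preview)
Your diagonalization step matches the paper's approach closely: compute the principal symbol, separate the three eigenvalues away from the set~(\ref{cond1}), and build full projectors by a normal-form iteration. No issue there.

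The Poincar\'e step, however, diverges from the paper in a way that matters. You propose a direct Mourre estimate for the full Poincar\'e operator~$H^\pm$ with a conjugate operator of dilation type, $a\sim x\cdot\xi$. But the principal Poisson bracket is then
\[
\{\sqrt{\xi^2+b^2(x_2)},\,x\cdot\xi\}=\frac{|\xi|^2}{\sqrt{\xi^2+b^2}}-\frac{x_2\,b(x_2)b'(x_2)}{\sqrt{\xi^2+b^2}},
\]
and the second term has no sign; adding the convection $\eps\,\bar u(x)\cdot\xi$ makes things worse since~$\bar u$ depends on both variables. So a global strict Mourre inequality with this~$a$ is not available, and your ``possibly modified'' hedge hides the real difficulty. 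The paper's argument is genuinely two-step: first a short-time (order~$\eps$) semiclassical/Egorov argument shows that the $x_1$-support of the Poincar\'e component escapes~$\mbox{Supp}_{x_1}\bar u$; once outside, the propagator becomes $x_1$-translation invariant, $\xi_1$ is conserved, and the conjugate operator can be taken in the $x_1$-direction, so that the relevant bracket reduces to~$\xi_1/\sqrt{\xi^2+b^2}$, which is bounded away from zero precisely thanks to~(\ref{cond2}). This is also how the paper establishes that~(\ref{cond2}) is propagated \emph{by the Poincar\'e flow}. Your write-up misses this reduction-to-the-translation-invariant-case, which is the mechanism that makes Mourre work here.

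For the Rossby part there is a related issue. You invoke~(\ref{cond2}) to keep the Rossby microsupport away from~$\{\xi_1=0\}$, but the paper explicitly states that~(\ref{cond2}) is propagated \emph{only} by the Poincar\'e component, not by the Rossby one. So you cannot use~$\xi_1\neq 0$ uniformly in time in the Rossby analysis. The paper's proof of~(\ref{rossbybehaviour}) relies instead on a direct study of the Rossby Hamiltonian system: conservation of the Rossby Hamiltonian together with the growth of~$b^2$ at infinity constrains the $x_2$-component of trajectories, without appealing to~(\ref{cond2}). Your coercivity idea via the~$b^{-2}$ factor is in the right spirit, but you should drop the appeal to~(\ref{cond2}) and argue purely from energy conservation and the structure of the Rossby symbol; this is also why, as you correctly anticipate, only the $x_2$-projection can be controlled.
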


\medskip

Compared to~\cite{CGPS}, the main difficulties are due to the presence of a~$x_1$-dependent underlying flow~$\bar u$. The diagonalization of the system (exhibiting Rossby and Poincar\'e-type waves, with very different qualitative features) must be revised, and obtained in a less explicit way. Moreover the proof of~(\ref{poincarebehaviour}) in Theorem~\ref{mainresult}, namely the  dispersion  of Poincar\'e waves can also not be proved in the same way (note that it is not assumed here   that~$b^2$ has only one non degenerate critical value). Finally the trapping of Rossby waves seems much harder to obtain since the underlying dynamical system no more decouples; the behaviour of the Rossby waves is therefore much less precise than in~\cite{CGPS}. 

Let us  explain our strategy here, compared with that in~\cite{CGPS} described above.

 \subsubsection{The diagonalization}
The construction
of the Rossby and Poincar\'e  modes   is not  as direct as in~\cite{CGPS}  due to the lack of translation invariance in~$x_1$. We choose therefore to follow a more abstract way to recover those modes in Section~\ref{scalar}, which relies on {\bf semi-classical analysis, and normal forms} (instead of explicit computations as in~\cite{CGPS}).  Finding the propagators associated with those modes requires a microlocalisation assumption of the type~(\ref{cond1}), in order for the eigenvalues of the matrix of principal symbols to be well separated. The diagonalization result is therefore in this paragraph conditional to the fact that the solution to the propagation equation is correctly microlocalized (that corresponds to Point~\ref{nondegeneracy} of Theorem~\ref{mainresult}).

 \subsubsection{Dispersion of Poincar\'e waves and propagation of the nondegeneracy assumption~(\ref{cond2})} 
 In order to prove~(\ref{poincarebehaviour}) in Theorem~\ref{mainresult} we again rely on a more abstract, and more efficient method than that followed in~\cite{CGPS}. It is based on Mourre estimates and the assumption~(\ref{cond2}) on the initial data: we start by proving, by a semi-classical argument, that after a very short time (of the order of~$\eps$)  the support in~$x_1$ of the solution escapes the support of~$\bar u$. Then we use {\bf Mourre estimates} to prove that the solution remains outside the support of~$\bar u$ for all times, and actually escapes any compact set in~$x_1$ in finite time
   (to prove this last point we use the fact that the equation reduces to a  translation-invariant equation in~$x_1$ since the support of the solution is outside the support in~$x_1$ of~$\bar u$).  This allows finally
   to check that  the nondegeneracy assumption~(\ref{cond2}) does hold for all times.
   This analysis is achieved in Section~\ref{sectionpoincare}.

 \subsubsection{Study of Rossby waves and propagation of the nondegeneracy assumption~(\ref{cond1})}
 In  Section~\ref{rossby} we first prove that the  nondegeneracy assumption~(\ref{cond1}) does hold during the propagation of Rossby waves.
 That is due to semi-classical analysis, by the study of the dynamical system associated with those waves. The study of that system is also the key to the proof of Point~(\ref{rossbybehaviour}), which is also proved in Section~\ref{rossby}.

\section{Reduction to scalar propagators}\label{scalar}
   
   In this section we shall construct three operators~$T_+$, $T_-$ and~$T_R$ diagonalizing~$A(x,\e D, \e)$. 
   
   We shall start by proving a general
  diagonalization result, and at the end we shall apply the general result to our context.

   Before stating the general result, let us give some notation. 
A  semi-classical symbol is a function~$a = a(x,\xi;\varepsilon)$ defined on~$\R^{2d} \times ]0,\eps_0]$ for some~$\eps_0 >0$, which depends smoothly on~$(x,\xi)$ and such that for any~$\alpha \in \N^{2d}$ and any compact $\cK \subset \R^{2d}$, there is a constant~$C$ such that for any~$((x,\xi),\eps) \in \cK\times ]0,\eps_0]$, 
$$
 |\partial^\alpha a((x,\xi),\eps)| \leq C .
$$
We shall consider the Weyl quantization of such symbols, as recalled in~(\ref{weyl}): for all $u$ is in~$ \mathcal D(\R^d)$, 
 $$
 {\rm Op}^W_\eps(a) u (x ) := \frac1{(2\pi \eps)^d} \int e^{i(x-y) \cdot \xi/\eps} a (\frac{x+y}2,\xi) u(y) \: dyd\xi.
 $$
 We shall say that a pseudodifferential operator~$ {\rm Op}^W_\eps(a) $ is supported in a   set~${\mathcal K} $ if for any smooth function~$\chi  $ equal to one in a neighborhood of~${\mathcal K} $ one has~$a\chi = a$.
 
   Finally we shall say that a matrix is  pseudodifferential  if each of its entries 
   is a pseudodifferential operator.

    \medskip 
    Let us  first prove the  following general result. 
   
     \begin{Thm}\label{dia}
  Let~${\mathcal K} $ be a compact subset of~$\R^{2d}$, and consider a~$N \times N$  hermitian pseudodifferential matrix~$A_\e = A(x,\eps D,\eps)$, supported in~${\mathcal K}$. Assume that 
  \begin{itemize}
  \item the (matrix) principal symbol of $A(x,\eps D,0)$, denoted by~$\mathcal A_0$, 
is diagonalizable, in the sense that there are some unitary and diagonal matrices of symbols, $\mathcal U$ and~$\mathcal D$, such that
$$
\mathcal U^{-1}\mathcal A_0\mathcal U=\mathcal D,
$$
\item   the eigenvalues~$(\delta_1(x,\xi),\dots,\delta_N(x,\xi))$  satisfy
\be\label{nondeg}
\forall i \neq j, \quad \inf_{(x,\xi) \in {\mathcal K}}{\vert\delta_i(x,\xi)-\delta_j(x,\xi)\vert}\geq C>0.
\ee
\end{itemize}
Then there exists a family of unitary and diagonal pseudodifferential operators  $V_\e $  and     $D_\e $ supported in~${\mathcal K}$, such that:
\be
V_\e ^{*}A_\e V_\e=D_\e +O(\e^\infty),\quad
V_\e ^{*} V_\e=I +O(\e^\infty).
\ee
Moreover one has
\be
D_\e = {\rm Op}^W_\eps(\mathcal D) +\e D_1+O(\e^2),
\ee
where
the principal symbol of $D_1$ is given by
 $$\mathcal D_1=\hbox{ diag} \left(  \widetilde\Delta_1-\frac{D_0 I_1+I_1D_0 }2\right)$$
 with the notations
\be\label{ddelta}
\begin{aligned}
\widetilde\Delta_1=\frac1 \eps \left(  {\rm Op}^W_\eps(\mathcal U^*)A_\e{\rm Op}^W_\eps(\mathcal U)-D_0  \right),\\
I_1=\frac1 \eps \left(  {\rm Op}^W_\eps(\mathcal U^*){\rm Op}^W_\eps(\mathcal U)-I \right)
\end{aligned}
\ee
More explicitly, let us denote by 
 $a_{ij}(x,\xi)$ the matrix elements of
$\mathcal A_1$, subsymbol of $A(x,\e D,\e)$ defined by:
\[
\mathcal A_1(x,\xi):=\frac{\partial\mathcal A}{\partial\e}(x,\xi,0)
\]
and by $u_{nj}(x,\xi),\ i=1\dots N,$ the coordinates of any unit eigenvector of $\mathcal A_0(x,\xi)$ of eigenvalue $\delta_n(x,\xi)$. We have
\be\label{theformula}
(\mathcal D_1)_{nn}=
\sum_{jk=1\dots N}\left(\Im\left(\overline{u_{jn}}
\{a_{jk},u_{kn}\}\right)+\frac{a_{jk}\{\overline{u_{jn}},u_{kn}\}}{2i}\right)
+(\mathcal
U^*\mathcal A_1\mathcal U)_{nn},
\ee
where $\{f,g\}:=\nabla_\xi f\nabla_xg-\nabla_x f\nabla_\xi g$ is the Poisson bracket on $T^*\R^n$.
%
%
%
%
 \end{Thm}
 
Here and in all the sequel, we say that a pseudo-differential operator $V$ is unitary if it satisfies
$$
V^* V = I +  O(\e^\infty).
$$  
  
  \smallskip
The proof is divided into two parts: in Section \ref{formal} we   present the formal construction and in Section~\ref{symbol} we   show that the symbols of the various operators formally constructed are indeed symbols. Finally
Section~\ref{rosscase} is devoted to the case of the matrix given by~(\ref{defA}).

\subsection{The formal construction}\label{formal}
The proof of Theorem \ref{dia} is a combination of semiclassical and perturbation methods. 
Let us start by defining
$$
U_0 = {\rm Op}^W_\eps(\mathcal U).
$$
Elementary properties of the Weyl quantization imply then that
$
\displaystyle U_0^* A_\e U_0 = D_0 + O(\e).
$

\medskip
 The following proposition
shows that one can construct  a unitary pseudodifferential operator ~$U_\infty$ such that
$$
U_ \infty ^* A_\e U_ \infty = D_0 +  O(\e).
$$

\begin{Lem}\label{unit}
Let $U$ be a pseudodifferential matrix such that $U^*U =I+\eps I_1$, where $I$ is the identity. Then one can find $\displaystyle V \sim\sum_{k=0}^\infty\eps^kV_k$ such that
\be
(U+\eps V )^*(U+\eps V )=I+O(\eps^\infty).
\ee
\end{Lem}\begin{proof}
Let us denote $\displaystyle V_0:=-\frac 1 2 I_1U$. On easily check that $(U+\eps V_0)^*(U+\eps V_0)=I+O(\eps^2)$. Indeed
\begin{eqnarray*}
(U+\eps V_0)^*(U+\eps V_0) & =& U^*U+\frac \eps 2(I_1U^*U+U^*UI_1)+O(\eps^2) \\
&=& I+\eps I_1-\eps I_1+O(\eps^2).
\end{eqnarray*}
Then one
concludes by iteration.
\end{proof}

That lemma allows to define the pseudo-differential operator of (semiclassical) order 0
$$
\Delta_1 = \frac1\eps \left(U_ \infty ^* A_\e U_ \infty - D_0 \right),
$$
where~$U_\infty$ is a unitary operator.

\medskip
Now our aim is to find a unitary operator~$V_\infty$ (up to~$O(\eps^\infty)$) such that
$$
(U_\infty V_\infty)^* A_\e (U_\infty V_\infty) = D_\infty + O(\eps^\infty),
$$
where~$D_\infty = D_0 + \e D_1 + \dots$ is a diagonal matrix satisfying the conclusions of the theorem.

We shall  write~$V_\infty = e^{i\e W }$, with~$W $ selfadjoint (so $V_\infty $ thus constructed is automatically unitary). We recall that if~$W $ is a pseudodifferential operator, then so is~$e^{i\e W}$ (simply by writing~$e^{i\eps W}\sim\sum_0^\infty\frac{(i\eps)^k}{k!}W ^k$).

 We look for~$W $ under the form~$\displaystyle
W\sim\sum_0^\infty \eps^kW_k,$
and compute the $W_k$ recursively.
Since
\[
V_\infty^{*}(D_0+\eps\Delta_1)V_\infty=(D_0+\eps\Delta_1)+i\eps[(D_0+\eps\Delta_1),W]+\frac{(i\eps)^2}2[[(D_0+\eps\Delta_1),W],W]+\dots
\]
we see that, if $W_1$ satisfies
\be\label{eq1}
[D_0,W_1]+\Delta_1=D_1+O(\e^2),\ D_1 \mbox{ diagonal,}
\ee
then we have that
\be\label{eq2}
e^{-i\eps W_1}(D_0+\eps\Delta_1)e^{i\eps W_1}=D_0+\eps D_1+\eps^2\Delta_2,
\ee
where $\Delta_2$ is a zero order pseudodifferential operator. The following lemma is a typical normal form type result, and is crucial for the following.
\begin{Lem}\label{pert}
Let $D_0$ be a diagonal pseudodifferential matrix whose principal symbol $\mathcal D_0 $ has a spectrum satisfying (\ref{nondeg})
and let~$\Delta_1$ be a  pseudodifferential matrix.

 Then
there exist two pseudodifferential matrices $W$ and $D_1$, with~$D_1$ diagonal, such that:
\be\label{secular}
[D_0,W]+ \Delta_1= D_1+\eps\Delta_2,
\ee
where~$\Delta_2$ is  a pseudodifferential matrix of order 0.

Moreover the principal symbol of $D_1$ is the diagonal part of the principal symbol  of
$\Delta_1$.
\end{Lem}
\begin{proof}
By the non degeneracy condition of the spectrum of $\mathcal D_0$ we know, by standard arguments (see \cite{RS} for instance), that there exists a matrix
$\mathcal W_0$ and a diagonal one $\mathcal D_1$ such that
\[
[\mathcal D_0,\mathcal W_0]+\mathcal D_{1,0}=\mathcal D_1,
\]
where $\mathcal D_{1,0}$ is the principal symbol of $\Delta_{1}$.

Indeed it is enough to take $\mathcal D_1$ as the diagonal part of $\mathcal D_{1,0}$ and 
\be \label{normal-form}
(\mathcal W_0(x,\xi))_{i,j}=\frac{(\mathcal D_{1,0}(x,\xi))_{i,j}}{\delta_{i}(x,\xi)-\delta_{j}(x,\xi)} 
\ee
and notice that the Weyl quantization of $\mathcal W_0$ satisfies (\ref{secular}).   
\end{proof}

By Lemma \ref{pert} we know that there exists $W_1$ satisfying (\ref{eq1}). Developing (\ref{eq2}) as
\[
e^{-i\eps W_1}(D_0+\eps\Delta_1)e^{i\eps W_1}=D_0+\eps(\Delta_1+ [D_0,W_1])+\eps^2\Delta_2,
\]
we get immediately (\ref{eq2}).

It is easy to get convinced  that all the $W_k$ will satisfy recursively an equation of the form
\[
[D_0,W_k]+\Delta_k=D_k+O(\eps),
\]
which can be solved by Lemma \ref{pert}.

\medskip
The expression for the principal symbol of $D_1$ follows by construction and the following well known lemma (see \cite{MA} for instance):
\begin{Lem}\label{sub}
Let $a$ and $b$ two symbols. Then the principal symbol of ${\rm Op}^W_\eps(a){\rm Op}^W_\eps(b)$ is $ab$ and its subprincipal
symbol is $\frac 1{2i}\{a,b\}$.
\end{Lem}

In order to derive \eqref{theformula} we have to compute the subprincipal symbol of the diagonal part of the right-hand side
 of \eqref{ddelta}, that is, for each $n=1\dots N$, 
\[
\sum_{jk}
{\rm Op}^W_\eps(\overline{\mathcal U_{jn}}){\rm Op}^W_\eps((\mathcal A_0+\e\mathcal A_1)_{jk}){\rm Op}^W_\eps(\mathcal U_{kn}),
\]
since $\mathcal U$ is unitary.

The term $\e\mathcal A_1$ is obviously responsible for the second term in the right-hand side of \eqref{theformula}. Using Lemma
\ref{sub} and the distributivity of the Poisson bracket, we  get the following expression for the first one:
\[
\sum_{jk}\frac 1 {2i}\left(\{\overline{\mathcal U_{jn}},(\mathcal A_0)_{jk}\mathcal U_{kn}\}
+\overline{\mathcal U_{jn}}\{(\mathcal A_0)_{jk},\mathcal U_{kn}\}\right)
\]
\[
=\sum_{jk}\frac 1 {2i}\left(\overline{\mathcal U_{jn}}\{(\mathcal A_0)_{jk},\mathcal U_{kn}\}
+(\mathcal A_0)_{jk}\{\overline{\mathcal U_{jn}},\mathcal U_{kn}\}
+\mathcal U_{kn}\{\overline{\mathcal U_{jn}},(\mathcal A_0)_{jk}\} \right)
\]

Interverting $j$ and $k$ in half of the terms and noticing that, since $\mathcal A_0$ is Hermitian, 
$(\mathcal A_0)_{jk}=\overline{(\mathcal A_0)_{kj}}$, we get easily \eqref{theformula}.

\subsection{Symbolic properties}\label{symbol}$ $

With the hypothesis that both $\mathcal D$ and $\mathcal U$ are pseudodifferential matrices it is quite obvious that $V_\e$
and $D_\e$ are pseudodifferential matrices as well. Indeed the formal construction in the preceding section shows that the
iterative process uses only three things: multiplications of pseudodifferential operators, computation of subprincipal
symbols and solving equation \eqref{secular}. 
%

For \eqref{secular}, the formula \eqref{normal-form} used in the proof of Lemma \ref{pert}, together with the non-degeneracy
condition \eqref{nondeg} which shows clearly that $(\delta_i(x,\xi)-\delta_j(x,\xi))^{-1}$ is a symbol, implies that $W_0$ is
a pseudodifferential operator. 

 Note that the microlocalization assumption is crucial in order that the expansions obtained by this iterative construction
do  define symbols. We have indeed no uniform control on the growth at infinity.

\subsection{The Rossby-Poincar\'e case}\label{rosscase}$ $

In the case of oceanic waves $A(x,\e D,\e)$ is given by \eqref{defA}:

$$
A (x,\eps D):=   i\left( \begin{matrix}   \eps\bar u\cdot \eps \nabla  &\eps \d_1&\eps \d_2 \\
\eps \d_1 & \eps\bar u\cdot \eps \nabla +\eps^2 \partial_1 \bar u_1  & -b( x_2)+\eps^2 \partial_2 \bar u_1 \\
\eps \d_2 & b( x_2) +\eps^2 \partial_1 \bar u_2&  \eps\bar u\cdot \eps \nabla +\eps^2 \partial_2 \bar u_2 \end{matrix}
\right) \,.
$$
Therefore 
$$
\mathcal A_0 (x,\xi):=   \left( \begin{matrix}   
0  &\xi_1&\xi_2 \\
\xi_1 & 0& -ib( x_2) \\
\xi_2 &i b( x_2) & 0  \end{matrix} \right) \,,
$$
and 
$$
\mathcal A_1 (x,\xi):=   \left( \begin{matrix}   
\bar u\cdot \xi  &0&0 \\
0 & \bar u\cdot \xi   &0 \\
0&0& \bar u\cdot \xi \end{matrix} \right) \,=\bar u\cdot \xi\left( \begin{matrix}   
1  &0&0 \\
0 & 1  &0 \\
0&0&1 \end{matrix} \right).
$$
A straightforward computation shows that the spectrum of $\mathcal A_0$ is 
$$\left\{0,\sqrt{\xi_1^2+\xi_2^2+b^2(x_2)},-\sqrt{\xi_1^2+\xi_2^2+b^2(x_2)}\right\}.$$

\subsubsection{Microlocalization}$ $
 The three eigenvalues  of $\mathcal A_0$ are separated if and only if  $$\xi_1^2+\xi_2^2+b^2(x_2)\neq 0.$$
 Therefore, considering  a compact subset ${\mathcal K} $ of $ \R^{2d}$ such that
 $${\mathcal K} \cap \{(x_1,x_2,\xi_1,\xi_2)\,/\, \xi_1^2+\xi_2^2+b^2(x_2)= 0\} =\emptyset\,$$
 ensures that
 \begin{itemize}
 \item the eigenvalues do not cross,  so that it is possible to get a
unitary diagonalizing matrix with  regular entries;
 \item the non degeneracy condition  \eqref{nondeg} is satisfied.
 \end{itemize}
 
 In other words, $A(x,\e D,\e)$ satisfies the assumptions of Theorem \ref{dia} provided that one considers only its action 
 on vector fields which are suitably microlocalized.
 
 We assume of course that this microlocalization condition is satisfied by the initial datum, which is the condition (\ref{cond1}).
 
 Furthermore, we will prove in the next two sections that the propagation by the scalar operators $T_\pm$ and $T_R$ (to be defined now) preserves this suitable microlocalization,
 thus justifying a posteriori  the diagonalization procedure for all times.

 \subsubsection{Computation of the Poincar\'e and Rossby Hamiltonians}$ $
The above computations show that one can define the two  Poincar\'e Hamiltonians as follows:
$$
\tau_\pm:= \pm \sqrt{\xi_1^2+\xi_2^2+b^2(x_2)}
$$
and we shall denote the associate operator constructed via Theorem~\ref{dia} by~$T_\pm$.

Now let us consider the Rossby Hamiltonian. 
In all this paragraph, for the sake of readability, we will denote 
$$\xib =\sqrt{\xi_1^2+\xi_2^2+b^2(x_2)}\,.$$
An easy computation shows that a (normalized) eigenvector of $\mathcal A_0(x,\xi)$ of zero eigenvalue is
\[
u_0=\frac1\xib \left(\begin{array}{c}b \\i\xi_2\\-i\xi_1\end{array}\right)
\]

By Theorem \ref{dia}, the Rossby Hamiltonian is then given by  the formula
\be\label{theformulaR}
\tau_R=
\sum_{j,k=1\dots 3}\left(\Im\left(\overline{u_{j0}}
\{a_{jk},u_{k0}\}\right)+\frac{a_{jk}\{\overline{u_{j0}},u_{k0}\}}{2i}\right)
+\sum_{j,k=1\dots 3} (\mathcal A_1)_{jk}\overline{u_{j0}}u_{k0}.
\ee

In order to compute the different Lie brackets, we start with a couple of simple remarks~:
$$ \{ \xi_j,f\} =\d_{x_j} f \quad  \hbox{ and } \quad \{b(x_2), f\} =-b'(x_2) \d_{\xi_2} f\,.$$
In particular, if $f$ does not depend on $x_1$, then $ \{ \xi_1,f\} =0$.

\medskip
The contribution  of the first term in  the
parenthesis in \eqref{theformulaR} is 
$$\begin{aligned}
\sum_{j,k=1\dots 3}&\left(\overline{u_{j0}}
\{a_{jk},u_{k0}\}\right)\\
& = {b\over \xib} \left\{ \xi_2, {-i\xi_1\over \xib}\right\} +{i\xi_2\over \xib} \left\{ ib, {-i\xi_1\over \xib}\right\}  +{i\xi_1\over \xib} \left(\left \{\xi_2, {b\over \xib}\right\} + \left\{ib, {i\xi_2\over \xib}\right\}\right)\\
&=  {-ib\xi_1 \over \xib}\d_{x_2} {1\over \xib} -{i\xi_2\xi_1b' \over \xib} \d_{\xi_2}{1\over \xib} +{i\xi_1\over \xib}\d_{x_2}{b\over \xib}+{i\xi_1b'\over \xib}\d_{\xi_2}{\xi_2\over \xib}\\
&= {2i\xi_1b'\over \xib^2} \cdotp
\end{aligned}
$$
Using the distributivity of the Poisson brackets, we get the contribution  of the second term in a very similar way
$$\begin{aligned}
\sum_{j,k=1\dots 3}&\frac{a_{jk}\{\overline{u_{j0}},u_{k0}\}}{2}\\
&= \xi_1 \left\{ {b\over \xib}, {i\xi_2\over \xib}\right\} -\xi_2 \left\{ {b\over \xib}, {i\xi_1\over \xib}\right\}+ib \left\{ {i\xi_1\over \xib},{i\xi_2\over \xib}\right\} \\
&=  \xi_1 \left( {ib\over \xib} \left\{{1\over \xib}, \xi_2\right\} +{i\xi_2\over \xib} \left\{b, {1\over \xi_b}\right\}+{i\over \xib^2} \{b,\xi_2\} \right) -{i\xi_2\xi_1\over \xib} \left\{ b, {1\over \xib}\right\}-{ib \xi_1\over \xib} \left\{{1\over \xib},\xi_2\right\}\\
&= {-ib'\xi_1\over \xib^2} \cdotp
\end{aligned}
$$ 
The computation of the second term of the right hand side of \eqref{theformulaR} is trivial since $\mathcal A_1$ is  a multiple of the
identity.   Adding with the two previous expressions we get finally 
 $$\tau_R=\frac{\xi_1b'}{\xi_1^2+\xi_2^2+b(x_2)^2} +\bar u \cdot \xi  $$
 and the associate operator will be denoted by~$T_R$.

 \begin{Rem}\label{x1-dependence}
 Since the elementary steps of the diagonalization process use only multiplications, computations of subprincipal symbols  and solving normal forms equations, all the subsymbols of $T_R$ and $T_\pm$ depend on $x_1$ only through  $\bar u$ and its derivatives.
 \end{Rem}


\section{Study of the Poincar\'e waves}\label{sectionpoincare} 
In   Section~\ref{scalar} we constructed two linear operators, called~$T_\pm$, whose principal symbols are
$$
\tau_\pm= \pm \sqrt{\xi_1^2+\xi_2^2+b^2(x_2)}.
$$
We now want to study the propagation equation associated to those operators, namely  the linear equation in~$\R \times \R^2$
\begin{equation}\label{eqpoincare}
i \e^2\partial_t \varphi_\pm = T_\pm \varphi_\pm  , \quad \varphi_{\pm |t = 0} = \varphi^0_\pm
\end{equation}
where~$\varphi^0_\pm$ are microlocalized in a compact set~${\mathcal C}$ satisfying Assumption~(\ref{cond1}). Before studying that equation we need to check that it makes sense, since a priori~$T_\pm$ is only defined on vector fields microlocalized on such a compact set. This is achieved in the coming section, where we check that  the separation of eigenvalues~(\ref{nondeg}) required in the statement of 
 Theorem~\ref{dia}  holds because~$\displaystyle \sqrt{\xi_1^2+\xi_2^2+b^2(x_2)}$ remains bounded away from zero during the propagation. 

Then we shall show that   the solutions to these equations exit any compact set in finite time (Point~(\ref{poincarebehaviour}) of Theorem~\ref{mainresult}).

\subsection{Microlocalization}
Let us prove the following result, which provides the first part of Point~(\ref{nondegeneracy}) in Theorem~\ref{mainresult} and allows to make sense of Equation~(\ref{eqpoincare}) for all times.
  \begin{Prop} Under the assumptions of Theorem~\ref{mainresult}, the operators~$T_\pm$ are self-adjoint, and 
    the function~$\varphi(t) = e^{ i \frac t{\e^2} T_\pm} \varphi^0_\pm $  are such that~$\mu \mbox{Supp}  \varphi _\pm(t) $ satisfies~(\ref{cond1}) for all times.
  \end{Prop}
 \begin{proof}
 The proof of that result relies on a spectral argument. Due to the form of the principal symbols of~$T_\pm$ recalled above, the operators~$T_\pm$ are
  self-adjoint. We can therefore define two  families~$(\psi_n^\pm)_{n \in \N}$ of (pseudo)-eigenvectors of~$T_\pm$ in~$L^2(\R^2)$ and two sequences of eigenvalues~$\lambda_n^\pm$ such that if the initial data writes
 $$
  \varphi^0_\pm (x) = \sum_n c_n^{\pm,0} \psi_n^\pm(x),
 $$
 then
 $$
  \varphi_\pm (t,x) = \sum_n e^{ i \frac {\lambda_n^\pm t}{\e^2}} c_n^{\pm,0} \psi_n^\pm(x).
 $$
 Since the eigenfunctions~$\psi_n^\pm$ are microlocalized on the energy surfaces of the Poincar\'e Hamiltonians, the result follows.
 \end{proof}
\subsection{Dispersion}
In this paragraph we shall prove  Point~(\ref{poincarebehaviour}) of Theorem~\ref{mainresult}.
The strategy is the following.

 In Section~\ref{shorttime} we prove using semi-classical analysis that for a very short time, the solutions to~(\ref{eqpoincare}) remain microlocalized in a compact set satisfying assumption (\ref{cond2}), and such that $ \mu \mbox{Supp}_{x_1} \varphi_\pm $ become disjoint from $\supp_{x_1} \bar u$. Section~\ref{longtime} is then devoted to the long-time behaviour of the solution, and Mourre estimates allow to prove that the solution exits any compact set after some time, and that it remains microlocalized far from~$\xi_1 = 0$.  

%

%


\subsubsection{Short time behaviour}\label{shorttime}
The aim of this paragraph is to prove the following result. It shows that the solutions of~(\ref{eqpoincare}) exit the support of~$\bar u$ after a time~$t_{\rm exit}\eps$, for~$| t_{\rm exit} | $ large enough (independent of~$\e$).
We only state
the forward in time result:  the backwards result is identical, up to changing the sign of time.
We shall further restrict  the analysis to~$T_+$ since   the argument for~$T_-$ is identical, up to some sign changes. 

 \begin{Prop}\label{shorttimeprop}
Let~$\varphi^0$ be a function, microlocalized in a compact set~${\mathcal C}$ satisfying Assumption~(\ref{cond1}), and let~$\varphi$ be the associate solution of~(\ref{eqpoincare}). Let~$[u_-,u_+]$ be a closed interval of~$\R $ containing~$\mbox{Supp}_{x_1}  \bar u$. There exists a  constant~$t_{\rm {exit}}>0$ such that for any~$\e \in ]0,1[$, the function~$\varphi (\e t_{\rm {exit}}, \cdot)$ is microlocalized in a compact set~${\mathcal K} $ such that the projection of~${\mathcal K} $ onto the~$x_1$-axis does not interesect~$[u_-,u_+]$. Moreover~$ \mu \mbox{Supp}_{\xi_1} \varphi$ is unchanged.

More precisely, if~$\mu \mbox{Supp}_{\xi_1} \varphi_0 \subset \R^+ \setminus \{0\}$, then~$ \mu \mbox{Supp}_{x_1} \varphi(\e  t_{\rm {exit}} , \cdot)\subset ]u_+,+\infty[$, and if~$\mu \mbox{Supp}_{\xi_1} \varphi_0 \subset \R^- \setminus \{0\}$, then~$ \mu \mbox{Supp}_{x_1} \varphi(\e  t_{\rm {exit}}, \cdot)\subset ]-\infty, u_-[$.
\end{Prop}
\begin{proof}
Define the function~$\psi (s) := \varphi ( \e s)$. Then~(\ref{eqpoincare}) reads
\begin{equation}\label{eqpoincarebis}
i \e \partial_s \psi= T_+ \psi , \quad \psi_{|s = 0} = \varphi^0,
\end{equation}
and any result proved on~$\psi$ on~$[0,{\mathcal T}]$ will yield the same result for~$\varphi$ on~$[0,{\mathcal T}\eps]$. Notice that~(\ref{eqpoincarebis}) is written in a semi-classical setting, so by the propagation of the microsupport theorem (see for instance~\cite{MA} and the references therein), the microsupport of~$\psi $ is propagated by the bicharacteristics, which are the integral curves of the principal symbol. Recall that the principal symbol of~$T_+$ is
$$
 \tau_+ (\xi_1,x_2 ,\xi_2)=   \sqrt{\xi_1^2 + \xi_2^2 + b^2(x_2)} 
$$
and the bicharacteristics are given by the following set of ODEs:
$$ 
  \left\{ \begin{array}{llll}
\displaystyle \dot x^t = \displaystyle  {\nabla_{\xi }  \tau_+ }  (\xi_1^t,x_2^t,\xi_2^t) , & \qquad x^0   =  (x_1^0,x_2^0)   \\
\displaystyle\dot \xi^t= - \displaystyle   {\nabla_x  \tau_ + }  
  (\xi_1^t,x_2^t,\xi_2^t) ,& \qquad \xi^0 =  (\xi_1^0,\xi_2^0) . 
\end{array} \right. $$
Notice that~$ \tau_+ $ is independent of~$x_1$, so~$\dot \xi_1^t$ is identically zero and therefore~$\xi_1^t \equiv \xi_1^0$. So for all~$s \geq 0$, the microlocal support  in~$ \xi_1$ of~$\psi (s)$ remains unchanged, and in particular is far from~$\xi_1 = 0$. Moreover one has
$$
\dot x_1^t =  \frac {\xi_1^0 }{ \sqrt{(\xi_1^0 )^2 + (\xi_2^t)^2 + b^2(x_2^t)} } \cdotp
$$
Now we recall that the bicharacteristic curves lie on  energy surfaces, meaning that on each bicharacteristic, $\tau_+ (\xi_1^0,x_2^t,\xi_2^t) $ is a constant. That implies that  $ (\xi_2^t)^2 + b^2(x_2^t)$ is a constant on each bicharacteristic, so that for all times,
$$
\dot x_1^t \equiv  \frac {\xi_1^0 }{ \sqrt{(\xi_1^0 )^2 + (\xi_2^ 0)^2 + b^2(x_2^ 0)} }   \cdot
$$
If~$\xi_1^0 > 0$,  
then~$x_1$ is propagated to the right and eventually escapes to the right of the support in~$x_1$ of~$\bar u$, whereas if~$\xi_1^0 < 0$,  the converse (to the left) occurs. 
Proposition \ref{shorttimeprop} is proved.
\end{proof}
%
%
%
\subsubsection{Long time behaviour}\label{longtime}
The aim of this paragraph is to prove the following result, which again is only proved for positive times for simplicity.
 \begin{Prop}\label{longtimeprop}
Under the assumptions of Proposition~\ref{shorttimeprop}, let~$\varphi^+$ be the solution of~(\ref{eqpoincare})
associated with the data~$\varphi (\e  t_{\rm {exit}}, \cdot)$. Then~$\mu\mbox{Supp}_{x_1}\varphi^+(t)$  does not intersect~$\mbox{Supp}_{x_1}\bar u$ for~$t \geq \e  t_{\rm {exit}}$, and $\mu\mbox{Supp}_{\xi_1} \varphi^+ (t)$ remains unchanged for~$t \geq \e  t_{\rm {exit}}$. Finally~$\mu\mbox{Supp}_{x_1}\varphi^+(t)$ exits  any compact set in~$x_1$ in finite time. \end{Prop}

\begin{proof} Before going into the proof, we shall simplify the analysis by only studying the case of~$T_+$ (the case~$T_-$ is obtained by identical arguments), and we shall only deal with the case when the support in~$\xi_1$ of the data lies in the positive half space.  The other case  is obtained   similarly.

The proof is based on Mourre's theory which we shall  now briefly recall, and  we refer to~\cite{JMP} and~\cite{Hunz} for all details.
Let us consider two self-adjoint operators~$H$ and~$A$ on a Hilbert space~${\mathcal H}$. We make the following assumptions:

\begin{enumerate}
\item \label{A1} the intersection of the domains of~$A $ and~$H$ is dense in the domain~${\mathcal D} (H)$ of~$H$  .

\item  \label{A2}  $t \mapsto e^{itA} $ maps~${\mathcal D} (H)$ to itself, and for all~$ \varphi^0 \in {\mathcal D} (H)$,
$$
\sup_{t \in [0,1]} \|H e^{itA} \varphi^0\| < \infty .
$$

\item  \label{A3} The operator $ i[H,A]$ is bounded from below and closable, and the domain~${\mathcal D} (B_1)$  where~$i B_1$ is its closure, contains~${\mathcal D} (H)$. More generally for all~$n \in \N$ the operator~$ i [i B_{n},A] $  is bounded from below and closable  and the domain~${\mathcal D} (B_{n+1})$   of its closure~$i B_{n+1}$ contains~${\mathcal D} (H)$, and finally~$B_{n+1} $ extends to a bounded operator from~${\mathcal D} (H)$ to its dual. 

\item There exists~$\theta >0$  and an open interval~$\Delta$ of~$\R$ such that if~$E_\Delta$ is the corresponding spectral projection of~$H$, then
\begin{equation} \label{mourreestimate}
E_\Delta i  [H,A] E_\Delta \geq \theta E_\Delta .
\end{equation}

\end{enumerate} 
Note that Assumptions~(\ref{A1} - \ref{A3}) can be replaced by the fact that~$[f(H),A]$ is bounded for any smooth, compactly supported function~$f$ (see~\cite{Hunz}).

Under those assumptions, for any integer~$m \in \N$ and  for any~$\theta ' \in ]0 , \theta [$, there is a constant~$C$ such that
 $$
\|  \chi_- (A-a-\theta't) e^{-iHt} g(H)  \chi_+ (A-a ) \| \leq C  t^{-m}
$$
where~$\chi_\pm$ is the characteristic function of~$\R^\pm$, $g $ is any smooth compactly supported function in~$\Delta$, and the above bound is uniform in~$a \in \R$. As pointed out in~\cite{Hunz}, this implies in particular that for any~$\varphi^0$ in the image of~$E_\Delta$, the function~$e^{-iHt} \varphi^0$ has spectral support in~$[a + t \theta' , \infty[$ with respect to~$A$, up to~$t^{- \infty}$.

Let us apply this theory to our situation. We   consider equation~(\ref{eqpoincare}) with   data~$e^{i \frac {  t_{\rm {exit}} } {\e} T_+}  \varphi^0_{+ }$,  and let us define the operator~$T_+^0$ as the operator~$T_+$ where~$\bar u$ has been chosen identically zero. We shall start by studying the equation 
\begin{equation}\label{eqpoincarebaruzero}
i \e^2\partial_t \widetilde \varphi = T_+^0 \widetilde\varphi  , \quad \widetilde\varphi_{|t =\e  t_{\rm {exit}} } =e^{i \frac {  t_{\rm {exit}} } {\e} T_+}  \varphi^0_{+ },
\end{equation}
for which we shall prove Proposition~\ref{longtimeprop}. Then we shall prove that the solution~$\widetilde\varphi$ actually solves the original equation~(\ref{eqpoincare}) with the same data~$e^{i \frac {  t_{\rm {exit}} } {\e} T_+}  \varphi^0_{+ }$ at $t=\eps t_{\rm exit}$ up to~$O(\e^\infty)$, because its support in~$x_1$ lies outside the support of~$\bar u$ and because the symbolic expansion of~$T_+$  depends on $x_1$ only through $\bar u$ and its derivatives (see Remark \ref{x1-dependence}).

So let us start by applying Mourre's theory to~(\ref{eqpoincarebaruzero}). 
Let us write  the projection of~${\mathcal K}$ onto the~$\xi_1$-axis as included in~$[d_0,d_1]$ with~$0<d_0<d_1< \infty$. We recall that   on the support of~$e^{i \frac {  t_{\rm {exit}} } {\e} T_+}  \varphi^0_{+ }$, $x_1 $ remains to the right of the support of~$\bar u$. Then we apply the theory to~$H =  T_+^0 $ and to~$A = x_1$ (the pointwise multiplication). 
Assumptions~(\ref{A1}) to~(\ref{A3}) are easy to check, in particular because this is a semiclassical setting, so only the principal symbols need to be computed.  Similarly finding a lower bound for~$E_\Delta i  [  T_+^0,x_1] E_\Delta$ boils down to computing the Poisson bracket~$ \{ \tau_+ , x_1 \}  $ where
$$
\{ f , g \}   = \nabla_\xi  f \cdot \nabla_x g -  \nabla_x f \cdot \nabla_\xi g,
$$ 
 and one finds
\begin{equation}\label{lowerbound}
  \{ \tau_+ , x_1 \}  =  \frac{\xi_1}{  \sqrt{\xi_1^2 + \xi_2^2 + b^2(x_2)}  } \cdotp
\end{equation}
Since~$  T_+^0$ has constant coefficients in~$x_1$,  one can take the Fourier transform of~(\ref{eqpoincarebaruzero})  and~$\xi_1 $ is preserved,  so in particular for all times one has~$\mu\mbox{Supp}_{\xi_1} \widetilde \varphi(t) \subset [d_0,d_1]$. One can furthermore choose for~$\Delta$ an interval of~$\R$ of the type~$]D_0  ,  D_1[$ where the constants~$D_0$ and~$D_1$ are chosen so that for any~$(x,\xi) \in {\mathcal K}$, one has
\begin{equation}\label{D0D1}
D_0< \sqrt{\xi_1^2 + \xi_2^2 + b^2(x_2)} < D_1.
\end{equation}
As the microlocal supports of the
eigenfunctions of~$T_+^0$ lie  on energy surfaces, we know that the solution  to~(\ref{eqpoincarebaruzero})   will remain in~$E_\Delta$ for all times. 
Now let us apply the results of~\cite{JMP} and~\cite{Hunz}.  By Lemma \ref{sub}, (\ref{lowerbound}), (\ref{D0D1}) and the assumption on~$\xi_1$ written above, we have that
$$
E_\Delta i  [H,A] E_\Delta \geq \e \frac{d_0}{D_0} E_\Delta ,
$$
so~(\ref{mourreestimate}) holds with~$\theta = \e d_0 /D_0$. It follows that the solution~$\displaystyle e^{i \frac {(t-\eps t_{\rm exit})}{\e^2}  T_+^0} \bigl(e^{i \frac {  t_{\rm {exit}} } {\e} T_+}  \varphi^0_{+ }\bigr)$ to~(\ref{eqpoincarebaruzero}) has a support in~$x_1$ such that 
$$
x_1 >  u_+ + \frac{d_0}{D_0} \frac t \e   
$$
which proves the result for~(\ref{eqpoincarebaruzero}).

Since~$\mu\mbox{Supp}_{x_1} e^{i \frac {(t-\eps t_{\rm exit})}{\e^2}  T_+^0}\bigl(e^{i \frac {  t_{\rm {exit}} } {\e} T_+}  \varphi^0_{+ }\bigr)$ does not cross~$\mbox{Supp}_{x_1}\bar u$, one has actually
 $$
 e^{i \frac {(t-\eps t_{\rm exit})}{\e^2}  T_+^0}\bigl(e^{i \frac {  t_{\rm {exit}}}  {\e} T_+}  \varphi^0_{+ }\bigr)=  e^{i \frac {(t-\eps t_{\rm exit})}{\e^2}  T_+ }\bigl(e^{i \frac {  t_{\rm {exit}}}   {\e} T_+}  \varphi^0_{+ }\bigr) \quad \mbox{in} \: L^2
 $$ 
locally  uniformly in~$t$ (see Appendix).

The proposition follows.
\end{proof}

 \section{Propagation of the Rossby waves}\label{rossby}

  \subsection{Semiclassical transport equations and microlocalization}$ $
  
  Because of the scaling of the Rossby hamiltonian (which is smaller  than the Poincar\'e hamiltonians by one order of magnitude), on the times scales considered here, the propagation of energy by Rossby waves 
  is described by the hamiltonian dynamics
  $$
  {dx_i\over dt} ={\d \tau_R\over \d \xi_i},\quad {d\xi_i\over dt} = - {\d \tau_R\over \d x_i},$$
  which can be written explicitly
  \begin{equation}
  \label{Rossby-eq}
  \begin{aligned}
  {d x_1\over dt} &=b'(x_2){\xib^2-2\xi_1^2\over \xib^4 }+u_1(x) ,\\
  {dx_2\over dt} &=-2 b'(x_2) {\xi_1\xi_2 \over \xib^4} +u_2(x),\\
  {d\xi_1\over dt} &=-\d_1 u_1 (x) \xi_1-\d_1 u_2(x) \xi_2,\\
  {d\xi_2\over dt} &=\xi_1{  2b(b')^2-b''\xib \over \xib^4} - \d_2 u_1 (x) \xi_1-\d_2 u_2(x) \xi_2
  \end{aligned}
  \end{equation}
  where we recall that~$\xib =\sqrt{\xi_1^2+\xi_2^2+b^2(x_2)}\,.$
  In order for the dynamics to be well defined and also in order to justify the diagonalization process, we   need   the quantity $\xib$ to remain bounded from below for all times.
  
  \begin{Prop}
  Let $\CC$ be some compact subset of $\R^4$ such that
   $${\mathcal C} \cap \{(x_1,x_2,\xi_1,\xi_2)\,/\, \xi_1^2+\xi_2^2+b^2(x_2)= 0\} =\emptyset\,.$$
      Then the bicharacteristics $t\mapsto (x(t),\xi(t))$ of the Rossby Hamiltonian starting from any point $ (x_1^0,x_2^0,\xi_1^0,\xi_2^0)$  of $\CC$ are defined  globally in time,
   and $\forall t \in \R$,
   $$\inf_{(x_1^0,x_2^0,\xi_1^0,\xi_2^0)\in \CC} (   \xi_1(t)^2+\xi_2(t)^2+b^2(x_2(t)) >0.$$
  \end{Prop}
  
  \begin{proof}
  As $b'$, $b''$, $u$ and $Du$ are Lipschitz,
  by the Cauchy-Lipschitz theorem the system of ODEs (\ref{Rossby-eq}) has a unique maximal solution. In order to prove that this solution is defined globally, it is enough to prove that the time derivative of this solution is uniformly bounded. This comes from assumption (\ref{bsymbol}) giving an upper bound on $b'/\bigl(1+b^2(x_2) \bigr)^\frac12$ and $b''/\bigl(1+b^2(x_2) \bigr)^\frac12$, and from the
  lower bound on $\xib$ to be established now.
  
  The crucial assumption here is the fact that $b'$ and $b$ do not vanish simultaneously, and more precisely the existence of $\eta, \beta >0$ such that for all $x_2\in \R$.
  $$ |b(x_2)|< \eta \Rightarrow |b'(x_2)|\geq \beta\,.$$
  Along a trajectory of the Rossby Hamiltonian, $\tau_R$ is conserved. Therefore, 
  \begin{itemize}
  \item either $b(x_2) \geq\eta$, and $\xib^2 >\eta^2$,
  \item or $b(x_2) <\eta$. In this last case, if $|\xi|\geq \eta$,  $\xib^2 >\eta^2$. Else,
  $$|\tau| \geq {\eta \over \xib}- \|\bar u\|_\infty \eta $$
  from which we deduce that
  $$ \xib \geq {\eta \over |\tau |+\| \bar u\|_\infty \eta}\,\cdotp$$
  \end{itemize}
  In any case,
  we have the lower bound
  $$ \xib \geq \eta \min\left( 1, {1 \over |\tau |+\|\bar  u\|_\infty \eta}\right)\,,$$
  which can be made uniform  for initial data in $\CC$ replacing $|\tau |$ by $\max_\CC |\tau|$.
    \end{proof}
  
  \subsection{Dynamics outside from the support of $\bar u$}$ $
  
  Using the fact that $\bar u$ has compact support, and simple properties of the Rossby dynamics in the absence of zonal flow, we can prove the following
  
   \begin{Prop} 
   Let $\CC$ be some compact subset of $\R^4$ such that
   $${\mathcal C} \cap \{(x_1,x_2,\xi_1,\xi_2)\,/\, \xi_1^2+\xi_2^2+b^2(x_2)= 0\}=\emptyset\,.$$
      Then 
   the bicharacteristics of the Rossby Hamiltonian starting from any point  of $\CC$ are bounded in $x_2$:
    $$\sup_{(x_1^0,x_2^0,\xi_1^0,\xi_2^0)\in \CC}|x_2(t)| <\infty.$$
    \end{Prop}

 \begin{proof}
 $\bullet$ Let us start by describing the dynamics in the absence of zonal flow: $\xi_1$ is then an invariant of the motion,
 so that the dynamics in $(x_2,\xi_2)$ can be decoupled. Furthermore, as the energy surfaces are compact
 $$\tau = {\xi_1 b'(x_2) \over \xi_1^2+\xi_2^2+b^2(x_2)}$$
 the motion along $x_2$ is  periodic (with infinite period for homoclinic and heteroclinic orbits).
 
 The motion along $x_1$ is then determined by the equation
 $${d x_1\over dt} =b'(x_2){\xib^2-2\xi_1^2\over \xib^4 } \cdotp $$
 It is trapped if and only if the average of the right-hand side over one period is zero.
 Outside from saddle points, this quantity depends continuously on $\xi_1$, so that we expect the initial data leading to trapped trajectories to belong to a manifold of codimension 1.
 This can be proved rigorously if $b^2$ has only one non degenerate critical points (see \cite{CGPS}).

 \medskip
 $\bullet$ Let us now turn to the influence of the zonal flow. We will first check that the only possible escape direction is again $x_1$.
 Indeed the energy surfaces corresponding to  $\tau \neq 0$ are bounded in the $x_2$ direction~: as $x_2\to \pm \infty$,
 $${b'(x_2) \xi_1 \over \xib^2} +\bar u(x)\cdot \xi \to 0\,.$$
 Consider now  a trajectory on the energy level $\tau_R=0$, and some point of this trajectory $(y_1,y_2,\xi_1,\xi_2) $ such that  $y_2 \notin \supp_{x_2}\bar u$. One has
 $$b'(y_2) \xi_1=0\,.$$
 - If $b'(y_2) =0$, then 
 $$ {d x_1\over dt} = {dx_2\over dt} ={d\xi_1\over dt} = {d\xi_2\over dt} =0\,.$$
 The uniqueness in Cauchy-Lipschitz theorem implies then that the trajectory is nothing else than a fixed point, and therefore in particular is bounded.
 
- If $\xi_1 =0$, then
 $$  {dx_2\over dt} ={d\xi_1\over dt} = {d\xi_2\over dt} =0 \quad \hbox{ and } \quad {dx_1\over dt} = b'(y_2){\xi_2^2+b^2(y_2)-\xi_1^2\over \xib^4 }\,,$$
 meaning that the trajectory is a uniform straight motion along $x_1$. In particular, it is bounded in the $x_2$-direction.
 
 Finally, we conclude that trajectories on the energy level $\tau_R=0$ are either trapped in the support $\supp_{x_2}\bar u$, or trivial in the $x_2$-direction.
 \end{proof}

  
  \subsection{Perspectives}$ $
As recalled in the introduction, it is generally   believed that  in the situation depicted in this paper (a flow around a   large macroscopic current) Rossby waves are trapped. However due to the   2-dimensional setting (compared to the work in~\cite{CGPS}) the trapping in the~$x_1$ direction seems difficult to prove, outside some specific cases studied in the previous paragraph. One way to be convinced of the trapping phenomenon should be by implementing the dynamical system  numerically. It should be pointed out however that actually 
 in order to get physically relevant predictions for the oceanic eddies, one should consider 3D models, or at least 2D models involving the influence of stratification.
The methods presented here seem to be robust and should be extended to such complex models, up to again the study of the hamiltonian system describing the Rossby dynamics.

\section*{ Appendix~: a comparison result}

For the sake of completeness, we state here the result which shows the stability of the propagation under a $O(\eps^\infty)$ error on the propagator.
This result has been implicitly used in the proof of the diagonalization when comparing $A$ and $T_\pm$, $T_R$, and in the proof of dispersion when comparing $T_\pm$ and $T^0_\pm$.

\begin{Prop}
Let $A_\eps$ and $\tilde A_\eps$ be two pseudo-differential operators such that
\begin{itemize}
\item $iA_\eps$ is  hermitian in $L^2(\R^d)$,
\item $A_\eps-\tilde A_\eps =O(\eps^\infty)$ microlocally on $\Omega \subset \R^{2d}$.
\end{itemize}
Let $\tilde \varphi$ be a solution to
$$i\d_t\tilde  \varphi +\tilde A_\eps \tilde \varphi =0$$
microlocalized in $\Omega$, and $\varphi$ be the solution to
$$i\d_t  \varphi + A_\eps  \varphi =0$$
with the same initial data.
Then, for all $N\in \N$,
$$\sup_{ t\leq \eps^{-N}} \| \varphi(t)-\tilde \varphi(t)\|_{L^2(\R^d)} =O(\eps^\infty)\,.$$
\end{Prop}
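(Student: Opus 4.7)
The proposition is a routine Duhamel--energy estimate exploiting the fact that the propagator of $A_\eps$ is unitary on $L^2$ (since $iA_\eps$ is skew-adjoint, equivalently $A_\eps$ self-adjoint).

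\emph{Step 1 (difference equation).} Set $w := \varphi - \tilde\varphi$. Subtracting the two evolution equations gives
\begin{equation*}
i\d_t w + A_\eps w = -(A_\eps - \tilde A_\eps)\tilde\varphi, \qquad w(0) = 0.
\end{equation*}
Duhamel's formula together with the unitarity of $U_\eps(t) := e^{itA_\eps}$ on $L^2(\R^d)$ then yields the pointwise-in-time bound
\begin{equation*}
\|w(t)\|_{L^2(\R^d)} \le \int_0^t \|(A_\eps - \tilde A_\eps)\tilde\varphi(s)\|_{L^2(\R^d)}\,ds.
\end{equation*}

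\emph{Step 2 (microlocal estimate of the integrand).} Fix an arbitrary $M\in\N$. Since $\tilde\varphi(s)$ is microlocalized in $\Omega$ uniformly in~$s$, one may pick a single cutoff $\chi\in C_c^\infty(\R^{2d})$, independent of $s$, supported in $\Omega$ and equal to $1$ on a neighborhood of the microsupport of $\tilde\varphi(s)$. The symbolic calculus, combined with the hypothesis $A_\eps - \tilde A_\eps = O(\eps^\infty)$ microlocally on $\Omega$, gives $(A_\eps - \tilde A_\eps)\,{\rm Op}^W_\eps(\chi) = O_{L^2\to L^2}(\eps^M)$. Decomposing $\tilde\varphi(s) = {\rm Op}^W_\eps(\chi)\tilde\varphi(s) + r_\eps(s)$ with $\|r_\eps(s)\|_{L^2} = O(\eps^\infty)$ leads to
\begin{equation*}
\|(A_\eps - \tilde A_\eps)\tilde\varphi(s)\|_{L^2} \le C_M\,\eps^M\,\|\tilde\varphi(s)\|_{L^2} + O(\eps^\infty).
\end{equation*}
The norm $\|\tilde\varphi(s)\|_{L^2}$ is controlled uniformly on $[0,\eps^{-N}]$ by a short energy identity: writing $S_\eps := \tilde A_\eps - A_\eps$, one has $\frac{d}{dt}\|\tilde\varphi\|_{L^2}^2 = 2\,\mathrm{Re}\langle iS_\eps\tilde\varphi,\tilde\varphi\rangle$, and by the same microlocal argument $\|S_\eps\tilde\varphi\|_{L^2} = O(\eps^\infty)\|\tilde\varphi\|_{L^2}$, so Gronwall keeps $\|\tilde\varphi(s)\|_{L^2}$ bounded on the entire range of $s$.

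\emph{Step 3 (conclusion).} Integrating the bound of Step 2 over $s\in[0,t]$ with $t\le\eps^{-N}$ produces $\|w(t)\|_{L^2} \le C_M\,\eps^{M-N}$, and since $M$ is arbitrary this is $O(\eps^\infty)$.

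\emph{Main obstacle.} The only delicate point is the \emph{uniformity in $s$} of the cutoff $\chi$ in Step 2, which requires that the microsupport of $\tilde\varphi(s)$ remain in a fixed compact subset of $\Omega$ throughout $[0,\eps^{-N}]$. In the present statement this is built into the assumption ``$\tilde\varphi$ microlocalized in $\Omega$''; in the concrete applications of this proposition (the comparisons $A \leftrightarrow T_\pm, T_R$ and $T_\pm \leftrightarrow T_\pm^0$) it is established beforehand by an Egorov-type propagation argument for the scalar normal-form operators, whose Hamiltonian flow preserves the relevant region.
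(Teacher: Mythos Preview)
Your argument is correct and follows the same energy/Duhamel route as the paper: the paper differentiates $\|\varphi-\tilde\varphi\|_{L^2}^2$ directly, uses the (skew-)hermitian property of $iA_\eps$ to drop the $\langle iA_\eps(\varphi-\tilde\varphi),\varphi-\tilde\varphi\rangle$ term, and then bounds by $\|(A_\eps-\tilde A_\eps)\tilde\varphi\|_{L^2}$, arriving at $\|\varphi(t)-\tilde\varphi(t)\|^2=O(\eps^\infty)\,t$. Your version is the Duhamel reformulation of the same computation, with the added merit that you spell out the microlocal cutoff step and the uniform-in-$s$ control of $\|\tilde\varphi(s)\|_{L^2}$, points the paper leaves implicit.
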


\begin{proof}
The proof is based on a simple energy inequality and is completely straightforward. We have
$$\begin{aligned}
{d\over dt} \| \varphi-\tilde \varphi\|^2_{L^2(\R^d)} &=2\langle iA_\eps\varphi-i\tilde A_\eps \tilde \varphi | \varphi-\tilde \varphi \rangle\\
&= 2\langle (iA_\eps-i\tilde A_\eps) \tilde \varphi | \varphi-\tilde \varphi \rangle\\
&\leq 2\| (A_\eps-\tilde A_\eps) \tilde \varphi \|_{L^2(\R^d)} \| \varphi-\tilde \varphi\|_{L^2(\R^d)}\,.
\end{aligned}
$$
This leads to
$$\| \varphi(t)-\tilde \varphi(t)\|^2_{L^2(\R^d)}=O(\eps^\infty) t\,,$$
which concludes the proof.
\end{proof}

   {\bf Acknowledgements.}  $ $ The authors are grateful to J.-F. Bony and N. Burq    for introducing them to     Mourre estimates, and for interesting discussions. 
I. Gallagher and L. Saint-Raymond are 
partially supported by the French Ministry of Research grant
ANR-08-BLAN-0301-01.


\begin{thebibliography}{99}
\bibitem{CGPS} C. Cheverry, I. Gallagher, T. Paul  \&  L. Saint-Raymond, Semiclassical and spectral analysis of oceanic waves, {\it submitted}.

\bibitem{Hunz} W. Hunziker, I.M. Sigal \& A. Soffer, Minimal escape velocities, {\it  Comm. Partial Differential Equations} {\bf  24}  (1999),  no. 11-12, 2279--2295.  

\bibitem{JMP} A. Jensen, E. Mourre  \& P. Perry, Multiple commutator estimates and resolvent smoothness in quantum scattering theory, {\it Annales de l'IHP, section A}, {\bf 41} 2 (1984), 297--225.

\bibitem{gill} A. E. Gill, Atmosphere-Ocean Dynamics, {\sl International
Geophysics Series}, {\bf Vol. 30}, 1982.
\bibitem{gill-longuet} A. E. Gill  \&  M. S. Longuet-Higgins, Resonant
interactions between planetary waves, {\sl Proc. Roy. Soc. London}, {\bf
A 299} (1967), pages~120--140.
\bibitem{Greenspan} H.P. Greenspan, {\sl The theory of rotating
fluids}, {  Cambridge monographs on mechanics and applied mathematics},
$1969$.

\bibitem{majda} A. Majda,  { \sl Introduction to PDEs and waves for the atmosphere and  ocean}. Courant 
Lecture Notes in Mathematics, 9. New York University,  Courant Institute of Mathematical 
Sciences, New York; American  Mathematical Society, Providence, RI, 2003. 
\bibitem{MA} A. Martinez,  An introduction to semiclassical and microlocal analysis, Springer (2002)
\bibitem{P1} J. Pedlosky,  Geophysical fluid dynamics, Springer (1979).
\bibitem{P2} J. Pedlosky, Ocean Circulation Theory, Springer (1996).
\bibitem{RS} M. Reed and B. Simon, {\sl Methods of Modern
    Mathematical Physics},  Academic Press,  1979.

\end{thebibliography}
\end{document}